\newtheorem{theorem}{Theorem}
\newtheorem{proposition}[theorem]{Proposition}
\numberwithin{theorem}{section}
\theoremstyle{remark}
\newtheorem*{example}{Example}
\newtheorem*{remark}{Remark}
\newcommand{\on}[2]{\setbox0=\hbox{$#1$}\setbox1=\hbox{$#2$}%
            \dimen0=\wd0\advance\dimen0 by \wd1\divide\dimen0 by 2%
             \ifdim\wd0>\wd1$#1$\hskip-\dimen0$#2$\advance\dimen0 by -\wd1%
              \else$#2$\hskip-\dimen0$#1$\advance\dimen0 by -\wd0%
             \fi%
            \hskip\dimen0}
\newcommand{\cross}{\smash\times\vphantom\bullet}
\newcommand{\onn}[2]{\makebox{\on{#1}{#2}}}
\newcommand{\leftcross}[1]{\hspace*{-1ex}\onn{\raisebox{-.5ex}
     {$\genfrac{}{}{0pt}{}{\hfill\hrulefill}{\hphantom{\hspace*{2ex}#1}}$}}
                {\stackrel{#1}{\cross}}}
\newcommand{\leftblob}[1]{\hspace*{-1ex}\onn{\raisebox{-.5ex}
     {$\genfrac{}{}{0pt}{}{\hfill\hrulefill}{\hphantom{\hspace*{2ex}#1}}$}}
                {\stackrel{#1}{\bullet}}}
\newcommand{\rightblob}[1]{\onn{\raisebox{-.5ex}
     {$\genfrac{}{}{0pt}{}{\hrulefill\hfill}{\hphantom{\hspace*{2ex}#1}}$}}
                 {\stackrel{#1}{\bullet}}\hspace*{-1ex}}
\newcommand{\middleblob}[1]{\onn{\raisebox{-.5ex}
     {$\genfrac{}{}{0pt}{}{\hrulefill}{\hphantom{\hspace*{2ex}#1}}$}}
                 {\stackrel{#1}{\bullet}}}
\newcommand{\middlecross}[1]{\onn{\raisebox{-.5ex}
     {$\genfrac{}{}{0pt}{}{\hrulefill}{\hphantom{\hspace*{2ex}#1}}$}}
                 {\stackrel{#1}{\cross}}}
\newcommand{\ooo}[3]{\leftblob{#1}\hspace*{-1ex}
               \middleblob{#2}\hspace*{-1ex}\rightblob{#3}}
\newcommand{\xoo}[3]{\leftcross{#1}\hspace*{-1ex}
               \middleblob{#2}\hspace*{-1ex}\rightblob{#3}}
\newcommand{\xxo}[3]{\leftcross{#1}\hspace*{-1ex}
               \middlecross{#2}\hspace*{-1ex}\rightblob{#3}}
\begin{document}
\title{The Penrose Transform for Complex Projective Space}
\author{Michael Eastwood}
\address{Department of Mathematics, University of Adelaide,
South Australia 5005}
\email{meastwoo@member.ams.org}
\thanks{This work was supported by the Australian Research Council.}
\subjclass{Primary 32L25; Secondary 53C28.}
\keywords{Penrose transform, Elliptic complex, Cohomology.}
\begin{abstract}
Various complexes of differential operators are constructed on complex
projective space via the Penrose transform, which also computes their
cohomology. 
\end{abstract}
\renewcommand{\subjclassname}{\textup{2000} Mathematics Subject Classification}
\maketitle

\section{Introduction} Throughout this article ${\mathbb{CP}}_n$ will denote
complex projective space as a homogeneous Riemannian manifold under the natural
action of ${\mathrm{SU}}(n+1)$. The invariant metric is called the Fubini-Study
metric. Details may be found in~\cite{besse}. Let
${\mathbb{F}}_{1,2}({\mathbb{C}}^{n+1})$ denote the complex flag manifold
$$\{(L,P)\mbox{ s.t.\ }0\subset L\subset P\subset{\mathbb{C}}^{n+1},\ 
\dim L=1,\ \dim P=2\}$$
and define a mapping
\begin{equation}\label{twistor}
\tau:{\mathbb{F}}_{1,2}({\mathbb{C}}^{n+1})\to{\mathbb{CP}}_n\quad\mbox{by}
\quad(L,P)\xrightarrow{\;\tau\;}L^\perp\cap P,\end{equation}
where $L^\perp$ is the orthogonal complement of $L\subset{\mathbb{C}}^{n+1}$
with respect to a fixed Hermitian inner product on~${\mathbb{C}}^{n+1}$. Notice
that $\tau$ is a submersion and, although it is not itself holomorphic, its
fibres are clearly holomorphic since over $\ell\in{\mathbb{CP}}_n$ the fibre
consists of pairs $(\ell^\perp\cap P,P)$ for those planes $P$ 
containing~$\ell$. As such, 
$\tau^{-1}(\ell)\cong{\mathbb{P}}({\mathbb{C}}^{n+1}/\ell)$ and is 
intrinsically ${\mathbb{CP}}_{n-1}$ as a complex manifold. 

The classical case is when~$n=2$, for then ${\mathbb{F}}_{1,2}({\mathbb{C}}^3)$
is the twistor space of~${\mathbb{CP}}_2$. It is a special case of the general
construction~\cite{ahs} associating a complex manifold to any anti-self-dual
$4$-dimensional conformal manifold. In fact, with its usual orientation
${\mathbb{CP}}_2$ is a self-dual manifold but we shall adopt the reverse
orientation as in~\cite{mfa,crl}. Equivalently, it is the orientation 
that makes 
the K\"ahler form $J$ on ${\mathbb{CP}}_2$
anti-self-dual. The Penrose
transform for ${\mathbb{CP}}_2$ realises the analytic cohomology
$H^r({\mathbb{F}}_{1,2}({\mathbb{C}}^3),{\mathcal{O}}(V))$ for any holomorphic
homogeneous vector bundle $V$ on ${\mathbb{F}}_{1,2}({\mathbb{C}}^3)$ as the
cohomology of an appropriate elliptic complex on~${\mathbb{CP}}_2$. On the
other hand, for irreducible bundles $V$ the Bott-Borel-Weil Theorem realises
these cohomology spaces for as irreducible representations of
${\mathrm{SL}}(4,{\mathbb{C}})$. So the Penrose transform both constructs
natural elliptic complexes on ${\mathbb{CP}}_2$ and also identifies the
cohomology of these complexes. Here are some examples (see \cite{mfa,es,njh}).

\begin{example} 
That $H^r({\mathbb{F}}_{1,2}({\mathbb{C}}^3),{\mathcal{O}})={\mathbb{C}}$ for
$r=0$ and vanishes for $r\geq 1$ implies
$$0\to{\mathbb{R}}\to\Gamma({\mathbb{CP}}_2,\Lambda^0)\xrightarrow{\;d\;}
\Gamma({\mathbb{CP}}_2,\Lambda^1)\xrightarrow{\;d\;}
\Gamma({\mathbb{CP}}_2,\Lambda_+^2)\to 0$$
is exact. Here the mappings $d$ are induced by the exterior derivative and 
$\Lambda^p$ denotes the bundle of $p$-forms with $\Lambda_+^2$ the self-dual 
$2$-forms. Alternatively, we may use complex notation whence  
\begin{equation}\label{prototype}\raisebox{-15pt}{\begin{picture}(301,40)
\put(0,35){\makebox(0,0){$0$}}
\put(5,35){\vector(1,0){10}}
\put(23,36){\makebox(0,0){${\mathbb{C}}$}}
\put(28,35){\vector(1,0){10}}
\put(70,35){\makebox(0,0){$\Gamma({\mathbb{CP}}_2,\Lambda^{0,0})$}}
\put(103,35){\vector(1,0){24}}
\put(103,32){\vector(1,-1){24}}
\put(113,23){\scriptsize$\partial$}
\put(113,37){\scriptsize$\overline\partial$}
\put(162,35){\makebox(0,0){$\Gamma({\mathbb{CP}}_2,\Lambda^{0,1})$}}
\put(162,5){\makebox(0,0){$\Gamma({\mathbb{CP}}_2,\Lambda^{1,0})$}}
\put(162,20){\makebox(0,0){$\oplus$}}
\put(195,32){\vector(1,-1){24}}
\put(195,5){\vector(1,0){24}}
\put(205,23){\scriptsize$\partial$}
\put(205,7){\scriptsize$\overline\partial$}
\put(253,5){\makebox(0,0){$\Gamma({\mathbb{CP}}_2,\Lambda_\perp^{1,1})$}}
\put(286,5){\vector(1,0){10}}
\put(301,5){\makebox(0,0){$0$}}
\end{picture}}\end{equation}
is exact. Here $\Lambda_\perp^{p,q}$ denotes the bundles of forms of type 
$(p,q)$ and the subscript $\perp$ denotes those that are orthogonal to~$J$. 
With our choice of orientation
$${\mathbb{C}}\otimes_{\mathbb{R}}\Lambda^2=
\begin{array}[t]{ccc}
\underbrace{\Lambda^{1,0}\oplus\Lambda^{0,1}\oplus{\mathbb{C}}J}
&\oplus&\Lambda_\perp^{1,1}\\ \|&&\|\\
{\mathbb{C}}\otimes_{\mathbb{R}}\Lambda_-^2&&
\makebox[0pt]{${\mathbb{C}}\otimes_{\mathbb{R}}\Lambda_+^2$}\end{array}$$
as detailed in~\cite{mfa}.
\end{example}

The aim of this article is to extend the Penrose transform to ${\mathbb{CP}}_n$ 
where (\ref{twistor}) is viewed as the twistor fibration. We shall find, for 
example, an exact sequence
\begin{equation}\label{lopsided}\raisebox{-15pt}{\begin{picture}(391,40)
\put(0,35){\makebox(0,0){$0$}}
\put(5,35){\vector(1,0){10}}
\put(23,36){\makebox(0,0){${\mathbb{C}}$}}
\put(28,35){\vector(1,0){10}}
\put(70,35){\makebox(0,0){$\Gamma({\mathbb{CP}}_3,\Lambda^{0,0})$}}
\put(103,35){\vector(1,0){24}}
\put(103,32){\vector(1,-1){24}}
\put(113,23){\scriptsize$\partial$}
\put(113,37){\scriptsize$\overline\partial$}
\put(162,35){\makebox(0,0){$\Gamma({\mathbb{CP}}_3,\Lambda^{0,1})$}}
\put(162,5){\makebox(0,0){$\Gamma({\mathbb{CP}}_3,\Lambda^{1,0})$}}
\put(162,20){\makebox(0,0){$\oplus$}}
\put(195,35){\vector(1,0){24}}
\put(195,32){\vector(1,-1){24}}
\put(195,5){\vector(1,0){24}}
\put(205,23){\scriptsize$\partial$}
\put(205,37){\scriptsize$\overline\partial$}
\put(205,7){\scriptsize$\overline\partial$}
\put(253,5){\makebox(0,0){$\Gamma({\mathbb{CP}}_3,\Lambda_\perp^{1,1})$}}
\put(253,35){\makebox(0,0){$\Gamma({\mathbb{CP}}_3,\Lambda^{0,2})$}}
\put(253,20){\makebox(0,0){$\oplus$}}
\put(286,32){\vector(1,-1){24}}
\put(286,5){\vector(1,0){24}}
\put(296,23){\scriptsize$\partial$}
\put(296,7){\scriptsize$\overline\partial$}
\put(344,5){\makebox(0,0){$\Gamma({\mathbb{CP}}_3,\Lambda_\perp^{1,2})$}}
\put(376,5){\vector(1,0){10}}
\put(391,5){\makebox(0,0){$0$}}
\end{picture}}\end{equation}
as the counterpart to (\ref{prototype}) on~${\mathbb{CP}}_3$. Notice that the 
bundles occurring in this complex are irreducible on ${\mathbb{CP}}_3$ as an 
Hermitian manifold.

\begin{example} 
The complex generated on ${\mathbb{CP}}_2$ by the holomorphic tangent bundle
$\Theta$ on the twistor space ${\mathbb{F}}_{1,2}({\mathbb{C}}^3)$ is
\begin{equation}\label{asd_deformation}
0\to\Lambda^1\xrightarrow{\;\nabla\;}
\begin{picture}(10,5)
\put(0,0){\line(1,0){10}}
\put(0,5){\line(1,0){10}}
\put(0,0){\line(0,1){5}}
\put(5,0){\line(0,1){5}}
\put(10,0){\line(0,1){5}}
\end{picture}_{\,\circ}\Lambda^1
\xrightarrow{\;\nabla^{(2)}\;}
\begin{picture}(10,10)
\put(0,0){\line(1,0){10}}
\put(0,5){\line(1,0){10}}
\put(0,10){\line(1,0){10}}
\put(0,0){\line(0,1){10}}
\put(5,0){\line(0,1){10}}
\put(10,0){\line(0,1){10}}
\end{picture}_{\,\circ+}\Lambda^1\to 0.\end{equation}
Here, the first differential operator in local c\"oordinates is
\begin{equation}\label{confKilling}
\textstyle\phi_a\mapsto[\nabla_a\phi_b+\nabla_b\phi_a]_\circ=
\nabla_a\phi_b+\nabla_b\phi_a-\frac{2}{n}g_{ab}\nabla^c\phi_c\end{equation}
where the subscript $\circ$ denotes the trace-free part with respect to the
Fubini-Study metric $g_{ab}$ and $\nabla_a$ is the Levi-Civita connection for
this metric. We are using Young diagrams~\cite{fh} to specify the
symmetries of tensor bundles. For example, the bundle
$\begin{picture}(10,5)
\put(0,0){\line(1,0){10}}
\put(0,5){\line(1,0){10}}
\put(0,0){\line(0,1){5}}
\put(5,0){\line(0,1){5}}
\put(10,0){\line(0,1){5}}
\end{picture}_{\,\circ}\Lambda^1$ consists of symmetric trace-free tensors. 
The second operator in (\ref{asd_deformation}) is 
$$\psi_{ab}\mapsto[\nabla_{(a}\nabla_{c)}\psi_{bd}
-\nabla_{(b}\nabla_{c)}\psi_{ad}-\nabla_{(a}\nabla_{d)}\psi_{bc}
+\nabla_{(b}\nabla_{d)}\psi_{ac}]_{\circ+}$$
where $+$ denotes the self-dual part with respect to the skew indices~$ab$. The
bundle $\begin{picture}(10,10) 
\put(0,0){\line(1,0){10}}
\put(0,5){\line(1,0){10}} 
\put(0,10){\line(1,0){10}} 
\put(0,0){\line(0,1){10}}
\put(5,0){\line(0,1){10}} 
\put(10,0){\line(0,1){10}}
\end{picture}_{\,\circ+}\Lambda^1$ is precisely the bundle of covariant tensors
with the resulting symmetries. It is an irreducible ${\mathrm{SO}}(4)$-bundle
of rank~$5$. As observed in~\cite{kokyuroku}, it is straightforward to write
the complexification of (\ref{asd_deformation}) in terms of irreducible
Hermitian bundles:--
\begin{equation}\label{asd_Hermitian}\raisebox{-30pt}{\begin{picture}(275,70)
\put(0,65){\makebox(0,0){$0$}}
\put(5,65){\vector(1,0){24}}
\put(5,62){\vector(1,-1){24}} 
\put(40,65){\makebox(0,0){$\Lambda^{0,1}$}}
\put(40,35){\makebox(0,0){$\Lambda^{1,0}$}}
\put(40,50){\makebox(0,0){$\oplus$}} 
\put(55,65){\vector(1,0){24}}
\put(55,62){\vector(1,-1){24}} 
\put(55,35){\vector(1,0){24}}
\put(55,32){\vector(1,-1){24}} 
\put(65,23){\scriptsize$\partial$}
\put(65,53){\scriptsize$\partial$}
\put(65,67){\scriptsize$\overline\partial$} 
\put(65,37){\scriptsize$\overline\partial$}
\put(100,5){\makebox(0,0){$\begin{picture}(10,5)
\put(0,0){\line(1,0){10}} 
\put(0,5){\line(1,0){10}} 
\put(0,0){\line(0,1){5}}
\put(5,0){\line(0,1){5}} 
\put(10,0){\line(0,1){5}}
\end{picture}\,\Lambda^{1,0}$}}
\put(100,35){\makebox(0,0){$\Lambda_\perp^{1,1}$}}
\put(100,65){\makebox(0,0){$\begin{picture}(10,5)
\put(0,0){\line(1,0){10}} 
\put(0,5){\line(1,0){10}} 
\put(0,0){\line(0,1){5}}
\put(5,0){\line(0,1){5}} 
\put(10,0){\line(0,1){5}}
\end{picture}\,\Lambda^{0,1}$}}
\put(100,20){\makebox(0,0){$\oplus$}} 
\put(100,50){\makebox(0,0){$\oplus$}}
\put(120,60){\vector(1,-1){48}} 
\put(120,32){\vector(2,-1){48}} 
\put(120,5){\vector(1,0){48}}
\put(130,7){\scriptsize$\overline\partial^{(2)}$}
\put(130,28){\scriptsize$\partial\overline\partial$} 
\put(130,54){\scriptsize$\partial^{(2)}$} 
\put(215,5){\makebox(0,0){$\begin{picture}(10,5) 
\put(0,0){\line(1,0){10}}
\put(0,5){\line(1,0){10}} 
\put(0,0){\line(0,1){5}} 
\put(5,0){\line(0,1){5}}
\put(10,0){\line(0,1){5}}
\end{picture}\,\Lambda^{0,1}\otimes_\perp\begin{picture}(10,5)
\put(0,0){\line(1,0){10}} 
\put(0,5){\line(1,0){10}} 
\put(0,0){\line(0,1){5}}
\put(5,0){\line(0,1){5}} 
\put(10,0){\line(0,1){5}}
\end{picture}\,\Lambda^{1,0}$}}
\put(260,5){\vector(1,0){10}}
\put(277,5){\makebox(0,0){$0.$}}
\end{picture}}\end{equation}
The Penrose transform also computes the global cohomology of these complexes.
Since $H^0({\mathbb{F}}_{1,2}({\mathbb{C}}^3),\Theta)
={\mathfrak{sl}}(3,{\mathbb{C}})$ (arising from the infinitesimal action of
${\mathrm{SL}}(3,{\mathbb{C}})$ on ${\mathbb{F}}_{1,2}({\mathbb{C}})$ as a
complex homogeneous space) and all higher cohomology vanishes, we conclude that
$$0\to{\mathfrak{su}}(3)\to
\Gamma({\mathbb{CP}}_2,\Lambda^1)\xrightarrow{\;\nabla\;}
\Gamma({\mathbb{CP}}_2,\begin{picture}(10,5)
\put(0,0){\line(1,0){10}}
\put(0,5){\line(1,0){10}}
\put(0,0){\line(0,1){5}}
\put(5,0){\line(0,1){5}}
\put(10,0){\line(0,1){5}}
\end{picture}_{\,\circ}\Lambda^1)
\xrightarrow{\;\nabla^{(2)}\;}
\Gamma({\mathbb{CP}}_2,\begin{picture}(10,10)
\put(0,0){\line(1,0){10}}
\put(0,5){\line(1,0){10}}
\put(0,10){\line(1,0){10}}
\put(0,0){\line(0,1){10}}
\put(5,0){\line(0,1){10}}
\put(10,0){\line(0,1){10}}
\end{picture}_{\,\circ+}\Lambda^1)\to 0$$
is exact. It is the anti-self-dual deformation complex and we conclude that 
${\mathbb{CP}}_2$ is rigid as an anti-self-dual conformal manifold, also that 
the only conformal motions are infinitesimal isometries (this latter 
conclusion is well-known by other methods). 
\end{example}
\begin{remark}
The Penrose transform for ${\mathbb{CP}}_2$ may often be understood, as
typified in the previous example, in terms of its being an anti-self-dual
conformal manifold. Though we shall be able to construct the transform just as
well for $\tau:{\mathbb{F}}_{1,2}({\mathbb{C}}^{n+1})\to{\mathbb{CP}}_n$, it is
far from clear whether it can be interpreted via some intrinsic geometric
structure on~${\mathbb{CP}}_n$. The complexes of differential operators that 
arise are surely worthy of further study and interpretation.
\end{remark}
\begin{remark}
In order to keep the notation to a minimum, the rest of this article will be
confined to the case
$\tau:{\mathbb{F}}_{1,2}({\mathbb{C}}^4)\to{\mathbb{CP}}_3$. This already 
captures the difficulties in encountered for ${\mathbb{CP}}_n$ in general.
\end{remark}
\section{A spectral sequence}\label{spectral}
We shall end up following the Penrose transform for ${\mathbb{CP}}_2$
constructed in \cite{npb} but we begin with a general technique for which we
only need the following. Let $Z$ be a complex manifold and $M$ a smooth
manifold. Suppose
\begin{equation}\label{tau}\tau:Z\to M\end{equation}
is a smooth submersion with compact complex fibres. The mapping (\ref{twistor})
is the example we would like to understand. The classical example, however, is
the twistor fibration $\tau:{\mathbb{CP}}_3\to S^4$, viewed in \cite{mfa} as a
quaternionic version of the Hopf fibration.
\begin{theorem}
There is a smooth complex vector bundle $\Lambda_\mu^{1,0}$ on $Z$ that is
holomorphic along the fibres of~$\tau$. Let us denote by $\Lambda_\mu^{p,0}$ 
the $p$-fold exterior power of~$\Lambda_\mu^{1,0}$. Let us suppose that all 
$\tau_*^q\Lambda_\mu^{p,0}$ are smooth vector bundles on $M$ where $\tau_*^q$ 
denotes the $q^{\mathrm{th}}$ direct image with respect to the holomorphic 
structure along the fibres of~$\tau$. Then, there is a spectral sequence
\begin{equation}\label{ss}E_1^{p,q}=\Gamma(M,\tau_*^q\Lambda_\mu^{p,0})
\Longrightarrow H^{p+q}(Z,{\mathcal{O}})\end{equation}
whose differentials are linear differential operators on~$M$.
\end{theorem}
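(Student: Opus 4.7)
The plan is to exhibit the spectral sequence as the one associated to a fine double complex on~$Z$ resolving~$\mathcal{O}_Z$, pushed forward along~$\tau$ to~$M$.

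First, set $\Lambda_\mu^{1,0}:=(T_\mu^{1,0})^*$ where $T_\mu^{1,0}:=T_\mu\otimes\mathbb{C}\cap T^{1,0}Z$ is a smooth subbundle of~$T^{1,0}Z$. Since each fibre $F\subset Z$ is a complex submanifold, $T_\mu^{1,0}|_F=T^{1,0}F$ and hence $\Lambda_\mu^{1,0}|_F=\Omega^1_F$ is genuinely holomorphic, which supplies the required fibrewise holomorphic structure; the exterior powers $\Lambda_\mu^{p,0}$ inherit it. Define $\Lambda_\mu^{p,q}:=\Lambda_\mu^{p,0}\otimes\wedge^q\overline{\Lambda_\mu^{1,0}}$ and let $\overline\partial_\mu$ act on smooth sections by restricting to each fibre and applying the fibre's own~$\overline\partial$. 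Then $(\mathcal{A}(\Lambda_\mu^{p,\bullet}),\overline\partial_\mu)$ is a fine fibrewise Dolbeault resolution of the sheaf of fibrewise-holomorphic sections of~$\Lambda_\mu^{p,0}$; compactness of the fibres plus ellipticity of~$\overline\partial_\mu$ on each fibre ensure that the $q$-th column cohomology of its pushforward is represented by the smooth vector bundle~$\tau_*^q\Lambda_\mu^{p,0}$ on~$M$.

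Next, I build a double complex $\mathcal{C}^{\bullet,\bullet}$ of fine sheaves on~$Z$ with $\mathcal{C}^{p,q}=\mathcal{A}(\Lambda_\mu^{p,q})$, vertical differential~$\overline\partial_\mu$, and a horizontal differential $d_h\colon\mathcal{C}^{p,q}\to\mathcal{C}^{p+1,q}$ induced from the ambient~$\overline\partial$ on~$Z$, so arranged that the total complex is quasi-isomorphic to the Dolbeault resolution $(\mathcal{A}^{0,\bullet}Z,\overline\partial)$ of~$\mathcal{O}_Z$. Pushing forward by~$\tau_*$ and taking global sections (which commutes with total cohomology because the sheaves are fine) gives a double complex on~$M$; filtering it by~$p$ yields a spectral sequence with $E_1^{p,q}=\Gamma(M,\tau_*^q\Lambda_\mu^{p,0})$ converging to $H^{p+q}(Z,\mathcal{O}_Z)$, convergence being guaranteed by boundedness of the filtration in each total degree. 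Each~$d_r$ is produced from the first-order operator~$d_h$ via the usual spectral-sequence recipe (lift, apply~$d_h$, project), so is a linear differential operator on~$M$ of order at most~$r$.

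The main obstacle is the construction of~$d_h$ and the verification that the resulting total complex really resolves~$\mathcal{O}_Z$. This rests on analysing the extension $0\to\Lambda^{0,1}_h\to\Lambda^{0,1}Z\to\Lambda_\mu^{0,1}\to 0$ coming from $T_\mu^{0,1}\subset T^{0,1}Z$, together with a fibrewise Serre-duality twist by the fibre canonical bundle accounting for $\Lambda_\mu^{p,0}$ (rather than $\Lambda_\mu^{0,p}$) appearing in the $E_1$ term; this twist is exactly what is carried over from the ${\mathbb{CP}}_2$ Penrose transform of~\cite{npb}. Once this identification is in place, the remaining ingredients (fine-sheaf cohomology, fibrewise Dolbeault, and standard spectral-sequence convergence) are routine.
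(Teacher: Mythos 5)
Your central identification is wrong, and the proof does not survive it. You set $\Lambda_\mu^{1,0}=(T_\mu^{1,0})^*$, the holomorphic cotangent bundle of the fibres, which for $\tau:{\mathbb{F}}_{1,2}({\mathbb{C}}^{n+1})\to{\mathbb{CP}}_n$ has rank $n-1$. The bundle the theorem needs is the \emph{kernel} of the natural surjection $\Lambda_Z^{0,1}\to\Lambda_\tau^{0,1}$ (where $\Lambda_\tau^{0,1}$ denotes fibrewise $(0,1)$-forms), a subbundle of $\Lambda_Z^{0,1}$ of rank $\dim_{\mathbb{C}}Z-\dim_{\mathbb{C}}(\mbox{fibre})$; its fibrewise holomorphic structure comes from the fact that $\overline\partial$ on $Z$ covers $\overline\partial_\tau$ on the quotient. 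This is not cosmetic. With your bundle, $\tau_*^q\Lambda_\mu^{p,0}$ is the Hodge cohomology $H^q(\mbox{fibre},\Omega^p)$ of the fibres, concentrated on the diagonal $p=q$ for ${\mathbb{CP}}_{n-1}$ fibres, and the resulting spectral sequence cannot abut to $H^{p+q}(Z,{\mathcal{O}})$: already for $n=2$ one finds $E_1^{1,1}=\Gamma({\mathbb{CP}}_2,\tau_*^1\Omega^1_{{\mathbb{CP}}_1})\neq 0$ with no differentials entering or leaving it, so $E_\infty^{1,1}\neq 0$ would have to inject into $H^2(Z,{\mathcal{O}})=0$. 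A rank count shows the same failure at the source: your $\bigoplus_{p,q}\Lambda_\mu^{p,0}\otimes\wedge^q\overline{\Lambda_\mu^{1,0}}$ has total rank $4^{\dim_{\mathbb{C}}(\mbox{fibre})}$, not the rank $2^{\dim_{\mathbb{C}}Z}$ of $\Lambda_Z^{0,\bullet}$, so the double complex you posit cannot be quasi-isomorphic to the Dolbeault resolution of ${\mathcal{O}}_Z$. The ``fibrewise Serre-duality twist'' you invoke to reconcile $\Lambda_\mu^{p,0}$ with $\Lambda_\mu^{0,p}$ is a symptom of this mismatch, not a cure, and the two steps you defer --- constructing $d_h$ and verifying that the total complex resolves ${\mathcal{O}}_Z$ --- are precisely where the theorem's content lives.

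The correct argument is also simpler than your double-complex scheme. From $0\to\tau^*\Lambda_M^1\xrightarrow{\;d\tau\;}\Lambda_Z^1\to\Lambda_\tau^1\to 0$ one obtains the surjection $\Lambda_Z^{0,1}\to\Lambda_\tau^{0,1}$; define $\Lambda_\mu^{1,0}$ as its kernel and filter the \emph{single} complex $(\Lambda_Z^{0,\bullet},\overline\partial)$ by $F^p=\Lambda_\mu^{p,0}\wedge\Lambda_Z^{0,\bullet-p}$. Compatibility of $\overline\partial$ with $\overline\partial_\tau$ shows that $\overline\partial$ preserves this filtration and induces the fibrewise operator $\overline\partial_\tau$ on the graded pieces $\Lambda_\mu^{p,0}\otimes\Lambda_\tau^{0,\bullet}$ (this is also what equips $\Lambda_\mu^{1,0}$ with its holomorphic structure along the fibres). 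Taking fibrewise cohomology and using the no-jumping hypothesis gives $E_1^{p,q}=\Gamma(M,\tau_*^q\Lambda_\mu^{p,0})$, converging to $H^{p+q}(Z,{\mathcal{O}})$ because the filtration is finite; the differentials are linear differential operators on $M$ since every map in sight is induced by $\overline\partial$. No horizontal differential, no Serre duality, and no auxiliary resolution are required.
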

\begin{proof} To say that $\tau$ is a submersion is to say that
$d\tau:\tau^*\Lambda_M^1\to\Lambda_Z^1$ is injective. Let $\Lambda_\tau^1$
denote the cokernel of this homomorphism. It is the dual bundle to the bundle
of vertical vector fields. To say that the fibres of $\tau$ are holomorphic is 
to say that complex multiplication preserves the vertical vector fields. It
defines a surjection of vector bundles $\Lambda_Z^{0,1}\to\Lambda_\tau^{0,1}$ 
and thus a complex vector bundle as its kernel. The exact sequence
\begin{equation}\label{relativeforms}
0\to\Lambda_\mu^{1,0}\to\Lambda_Z^{0,1}\to\Lambda_\tau^{0,1}\to 0\end{equation}
defines the bundle $\Lambda_\mu^{1,0}$. The composition
$$\Lambda_Z^{0,0}\xrightarrow{\;\overline\partial\;}\Lambda_Z^{0,1}\to
\Lambda_\tau^{0,1}$$
coincides with the $\overline\partial$-operator intrinsic to the fibres
of~$\tau$, which we shall write as~$\overline\partial_\tau$. Commutativity of
the diagram 
$$\begin{array}{ccc}
\Lambda_Z^{0,1}&\xrightarrow{\;\overline\partial\;}&\Lambda_Z^{0,2}\\
\downarrow&&\downarrow\\
\Lambda_\tau^{0,1}&\xrightarrow{\;\overline\partial_\tau\;}&\Lambda_\tau^{0,2}
\end{array}$$
shows that $\overline\partial:\Lambda_Z^{0,1}\to\Lambda_Z^{0,2}$ induces a
differential operator $\overline\partial_\tau:\Lambda_\mu^{1,0}\to
\Lambda_\tau^{0,1}\otimes\Lambda_\mu^{1,0}$, which defines the holomorphic
structure along the fibres claimed in the statement of the theorem. The
spectral sequence is simply that of a filtered complex, where the assumption
that all $\tau_*^q\Lambda^{p,0}$ are smooth vector bundles is saying that the
dimensions of these finite-dimensional cohomology groups along the (compact)
fibres does not jump (and generically this is true).
\end{proof}

\begin{remark} We shall be interested (\ref{ss}) for the
fibration~(\ref{twistor}). In this case, because it is also true for the
fibration itself, all bundles are manifestly homogeneous under the action 
of ${\mathrm{SU}}(n+1)$. In particular, there can be no rank jumping and 
$\tau_*^q\Lambda_\mu^{p,0}$ are not only smooth vector bundles but homogeneous 
to boot. Their computation is a matter of representation theory, which we now 
pursue.
\end{remark}

Following~\cite{npb}, it is useful to consider the complexification of the 
submersion~(\ref{twistor}). For simplicity, we shall write out the details in 
case $n=3$ and abbreviate the flag manifold 
${\mathbb{F}}_{1,2}({\mathbb{C}}^4)$ as~${\mathbb{F}}$. A suitable 
complexification of ${\mathbb{CP}}_3$ is given by
$${\mathbb{M}}\equiv\{(\ell,H)\in{\mathbb{CP}}_3\times{\mathbb{CP}}_3^*
\mbox{ s.t.\ }\ell\not\in H\},$$
where ${\mathbb{CP}}_3^*$ is regarded as the space of hyperplanes $H$
in~${\mathbb{CP}}_3$ and the totally real embedding
${\mathbb{CP}}_3\hookrightarrow{\mathbb{M}}$ is given by
$\ell\mapsto(\ell,\ell^\perp)$. Just as ${\mathbb{CP}}_3$ is a homogeneous
space for ${\mathrm{SU}}(4)$, so ${\mathbb{M}}$ is a homogeneous space for 
${\mathrm{SL}}(4,{\mathbb{C}})$. Specifically, 
\begin{equation}\label{M}{\mathbb{M}}={\mathrm{SL}}(4,{\mathbb{C}})/
\mbox{\footnotesize$\left\{\left[{\begin{array}{cccc}*&0&0&0\\
0&*&*&*\\0&*&*&*\\0&*&*&*\end{array}}\right]\right\}$}
\quad\mbox{with basepoint}\quad
\mbox{\footnotesize$
\left(\left[\begin{array}c *\\ 0\\ 0\\ 0\end{array}\right],
\left[\begin{array}c 0\\ *\\ *\\ *\end{array}\right]\right)$}.\end{equation}
The submersion itself complexifies to a double fibration
\begin{equation}\label{db}\raisebox{-20pt}{\begin{picture}(80,50)
\put(0,5){\makebox(0,0){${\mathbb{F}}$}}
\put(80,5){\makebox(0,0){${\mathbb{M}}.$}}
\put(40,45){\makebox(0,0){${\mathbb{G}}$}}
\put(32,37){\vector(-1,-1){24}}
\put(48,37){\vector(1,-1){24}}
\put(16,28){\makebox(0,0){$\mu$}}
\put(66,28){\makebox(0,0){$\nu$}}
\end{picture}}\end{equation}
Here,
\begin{equation}\label{weirdchoice}{\mathbb{G}}={\mathrm{SL}}(4,{\mathbb{C}})/
\mbox{\footnotesize$\left\{\left[{\begin{array}{cccc}*&0&0&0\\
0&*&*&*\\0&0&*&*\\0&0&*&*\end{array}}\right]\right\}$}
\qquad{\mathbb{F}}={\mathrm{SL}}(4,{\mathbb{C}})/
\mbox{\footnotesize$\left\{\left[{\begin{array}{cccc}*&*&*&*\\
0&*&*&*\\0&0&*&*\\0&0&*&*\end{array}}\right]\right\}$},\end{equation}
the mapping $\nu:{\mathbb{G}}\to{\mathbb{M}}$ is the natural one, and
$\mu:{\mathbb{G}}\to{\mathbb{F}}$ is induced by the homomorphism
\begin{equation}\label{reflect}{\mathrm{SL}}(4,{\mathbb{C}})\ni A\mapsto
\mbox{\footnotesize$\left[{\begin{array}{cccc}0&1&0&0\\
1&0&0&0\\0&0&1&0\\0&0&0&1\end{array}}\right]$}
A\mbox{\footnotesize$\left[{\begin{array}{cccc}0&1&0&0\\
1&0&0&0\\0&0&1&0\\0&0&0&1\end{array}}\right]$}
\in{\mathrm{SL}}(4,{\mathbb{C}}).\end{equation}
Alternatively, a different choice of basepoint would lead to our writing
${\mathbb{F}}$ as 
\begin{equation}\label{easychoice}{\mathrm{SL}}(4,{\mathbb{C}})/
\mbox{\footnotesize$\left\{\left[{\begin{array}{cccc}*&0&*&*\\
*&*&*&*\\0&0&*&*\\0&0&*&*\end{array}}\right]\right\}$}\end{equation}
with both $\mu$ and $\nu$ the natural projections. But (\ref{weirdchoice}) is
preferred so as to coincide with the conventions established in
\cite{beastwood} for writing flag manifolds. Moreover, the irreducible
homogeneous vector bundles on ${\mathbb{F}}$ may then be written as
$$\xxo{a}{b}{c},\mbox{ for integers }a,b,c\mbox{ with }c\geq 0\enskip
\mbox{(see \cite{beastwood} for details)}.$$
The irreducible homogeneous vector bundles on ${\mathbb{G}}$ are in 1--1
correspondence with the finite-dimensional irreducible representations of the
stabiliser subgroup 
$$\mbox{\footnotesize$\left\{\left[{\begin{array}{cccc}*&0&0&0\\
0&*&*&*\\0&0&*&*\\0&0&*&*\end{array}}\right]\right\}$}$$
from~(\ref{weirdchoice}). But these are carried by the smaller subgroup
$$\mbox{\footnotesize$\left\{\left[{\begin{array}{cccc}*&0&0&0\\
0&*&0&0\\0&0&*&*\\0&0&*&*\end{array}}\right]\right\}$},$$
which, as the appropriate Levi factor, also carries the representations
inducing the irreducible homogeneous vector bundles on~${\mathbb{F}}$. We
shall, therefore, use the same notation for the homogeneous bundles on
${\mathbb{G}}$ but add ${\mathbb{G}}$ as a subscript when needed:--
$$\xxo{p}{q}{r}_{\mathbb{G}},\mbox{ for integers }p,q,r\mbox{ with }r\geq 0.$$
The price to pay for the conjugation (\ref{reflect}) used in defining $\mu$ is
that the corresponding simple reflection must be invoked in the Weyl group to
pull back homogeneous vector bundles from ${\mathbb{F}}$ to~${\mathbb{G}}$:--
\begin{equation}\label{pullback}
\mu^*\xxo{a}{b}{c}_{\mathbb{F}}=\xxo{-a}{a+b}{c}_{\mathbb{G}}\enskip
\mbox{(see \cite{beastwood} for details)}.\end{equation}
Finally, we need a notation for the homogeneous vector bundles
on~${\mathbb{M}}$ and its real slice~${\mathbb{CP}}_3$. {From} (\ref{M}) it is 
clear that we may use the usual notation~\cite{beastwood}
$$\xoo{a}{b}{c}_{\mathbb{M}},
\mbox{ for integers }a,b,c\mbox{ with }b\geq 0,c\geq 0$$
and that every ${\mathrm{SU}}(4)$-homogeneous bundle on the smooth manifold
${\mathbb{CP}}_3$ extends uniquely to a
${\mathrm{SL}}(4,{\mathbb{C}})$-homogeneous bundle on its
complexification~${\mathbb{M}}$. If we restrict the action of 
${\mathrm{SL}}(4,{\mathbb{C}})$ on the double fibration (\ref{db}) to the real 
form ${\mathrm{SU}}(4)$, then we obtain a real splitting $\sigma$ of the 
holomorphic submersion $\mu$ and a diagram
\begin{equation}\label{fullembedding}\begin{picture}(80,50)
\put(0,5){\makebox(0,0){${\mathbb{F}}$}}
\put(80,5){\makebox(0,0){${\mathbb{M}}.$}}
\put(40,45){\makebox(0,0){${\mathbb{G}}$}}
\put(32,37){\vector(-1,-1){24}}
\put(48,37){\vector(1,-1){24}}
\put(16,28){\makebox(0,0){$\mu$}}
\put(66,28){\makebox(0,0){$\nu$}}
\put(42,4){\makebox(0,0){${\mathbb{CP}}_3$}}
\put(20,6){\makebox(0,0){$\xrightarrow{\;\tau\;}$}}
\put(62,4){\makebox(0,0){$\hookrightarrow$}}
\put(10,11){\vector(1,1){24}}
\put(28,22){\makebox(0,0){$\scriptstyle\sigma$}}
\end{picture}\end{equation}
so that $\tau=\nu\circ\sigma$. In this way we may view ${\mathbb{F}}$ as a 
submanifold of ${\mathbb{G}}$ and then the fibres of~$\tau$, with their complex 
structure, coincide with the fibres of~$\nu$. The canonical homomorphism of vector 
bundles
$$\Lambda_{\mathbb{G}}^{1,0}|_{\mathbb{F}}=\sigma^*\Lambda_{\mathbb{G}}^{1,0}
\hookrightarrow
\sigma^*({\mathbb{C}}\otimes_{\mathbb{R}}\Lambda^1_{\mathbb{G}})
\xrightarrow{\;d\sigma\;}
{\mathbb{C}}\otimes_{\mathbb{R}}\Lambda^1_{\mathbb{F}}\twoheadrightarrow
\Lambda_{\mathbb{F}}^{0,1}$$
induces an isomorphism between $\Lambda_\mu^{1,0}$ on ${\mathbb{G}}$ defined by 
the exact sequence
\begin{equation}\label{extendedrelativeforms}
0\to\mu^*\Lambda_{\mathbb{F}}^{1,0}\xrightarrow{\;d\mu\;}
\Lambda_{\mathbb{G}}^{1,0}\to\Lambda_\mu^{1,0}\to 0\end{equation}
and its restriction to ${\mathbb{F}}$ defined by~(\ref{relativeforms}).
Furthermore, the holomorphic structure on $\Lambda_\mu^{1,0}$ along the fibres
of $\tau$ inherited from (\ref{relativeforms}) coincides with its evident
holomorphic structure on ${\mathbb{G}}$ defined
by~(\ref{extendedrelativeforms}) (notice from (\ref{fullembedding}) that the
fibres of $\tau$ agree with the fibres of $\nu$ over
${\mathbb{CP}}_3\hookrightarrow{\mathbb{M}}$). We reach the standard conclusion
that the spectral sequence (\ref{ss}) may be obtained by restricting the terms
in the spectral sequence
$$E_1^{p,q}=\Gamma({\mathbb{M}},\nu_*^q\Lambda_\mu^{p,0})$$
derived in \cite{beastwood} to~${\mathbb{CP}}_3$ replacing holomorphic sections
on the complexification ${\mathbb{M}}$ by smooth sections on the real
slice~${\mathbb{CP}}_3$. The point of this man{\oe}uvre is that the direct
image bundles $\nu_*^q\Lambda_\mu^p$ are easily computed by
representation theory as detailed in~\cite{beastwood}. Firstly, we
need to identify $\Lambda_\mu^{1,0}$ as a homogeneous vector bundle
on~${\mathbb{G}}$. According to (\ref{weirdchoice}) and (\ref{easychoice}) this
bundle is induced by the co-Adjoint representation of
$$\mbox{\footnotesize$\left\{\left[{\begin{array}{cccc}*&0&0&0\\
0&*&*&*\\0&0&*&*\\0&0&*&*\end{array}}\right]
\mbox{\normalsize$\in{\mathrm{SL}}(4,{\mathbb{C}})$}\right\}$}
\quad\mbox{on}\quad
\left[\frac{\mbox{\footnotesize$\left\{\left[{\begin{array}{cccc}*&0&*&*\\
*&*&*&*\\0&0&*&*\\0&0&*&*\end{array}}\right]
\mbox{\normalsize$\in{\mathfrak{sl}}(4,{\mathbb{C}})$}\right\}$}}
{\mbox{\footnotesize$\left\{\left[{\begin{array}{cccc}*&0&0&0\\
0&*&*&*\\0&0&*&*\\0&0&*&*\end{array}}\right]
\mbox{\normalsize$\in{\mathfrak{sl}}(4,{\mathbb{C}})$}\right\}$}}\right]^*$$
Therefore,
\begin{equation}\label{onG}\begin{array}{rcl}
\Lambda_\mu^{1,0}&=&
\xxo{1}{0}{1}_{\mathbb{G}}\oplus\xxo{-2}{1}{0}_{\mathbb{G}}\\[3pt]
\Lambda_\mu^{2,0}&=&
\xxo{2}{1}{0}_{\mathbb{G}}\oplus\xxo{-1}{1}{1}_{\mathbb{G}}\\[3pt]
\Lambda_\mu^{3,0}&=&
\xxo{0}{2}{0}_{\mathbb{G}}
\end{array}\end{equation}
and elementary application of the formul{\ae} in \cite{beastwood} yields
$$\begin{picture}(390,80)
\put(0,20){\makebox(0,0)[l]{$\Gamma({\mathbb{CP}}_3,\xoo{0}{0}{0})\to
\Gamma\Big({\mathbb{CP}}_3,\!\!
\begin{array}c\xoo{1}{0}{1}\\[-5pt] \oplus\\[-1pt] 
\xoo{-2}{1}{0}\end{array}\!\!\Big)\to
\Gamma\Big({\mathbb{CP}}_3,\!\!
\begin{array}c\xoo{2}{1}{0}\\[-5pt] \oplus\\[-1pt] 
\xoo{-1}{1}{1}\end{array}\!\!\Big)\to
\Gamma({\mathbb{CP}}_3,\xoo{0}{2}{0})$}}
\put(40,60){\makebox(0,0){$0$}}
\put(140,60){\makebox(0,0){$0$}}
\put(240,60){\makebox(0,0){$0$}}
\put(340,60){\makebox(0,0){$0$}}
\put(40,45){\makebox(0,0){$|$}}
\put(40,70){\vector(0,1){10}}
\put(380,20){\vector(1,0){10}}
\put(387,13){\makebox(0,0){$p$}}
\put(34,75){\makebox(0,0){$q$}}
\end{picture}$$
for the spectral sequence (\ref{ss}) applied to the submersion~(\ref{twistor}).
Convergence to
$$H^r({\mathbb{F}},{\mathcal{O}})=H^r({\mathbb{F}},\xxo{0}{0}{0})=
\Big\{\begin{array}l{\mathbb{C}}\enskip\mbox{if }r=0\\
0\enskip\mbox{if }r\geq 1,\end{array}$$
implies that this spectral sequence collapses to an exact sequence, which is
readily identified as~(\ref{lopsided}).

\section{Further examples}
As a natural higher dimensional counterpart to (\ref{asd_Hermitian}) let us now 
consider the Penrose transform of 
$H^r({\mathbb{F}}_{1,2}({\mathbb{C}}^4),\Theta)$. One immediate issue that must 
be dealt with is that the holomorphic tangent bundle $\Theta$ is reducible but 
not decomposable.
Specifically~\cite{beastwood}, there is a short exact sequence
\begin{equation}\label{Theta}
0\to\begin{array}c\xxo{2}{-1}{0}_{\mathbb{F}}\\[-5pt] \oplus\\[-1pt] 
\xxo{-1}{1}{1}_{\mathbb{F}}
\end{array}\to\Theta\to\xxo{1}{0}{1}_{\mathbb{F}}\to 0\end{equation}
that does not split as ${\mathrm{SL}}(4,{\mathbb{C}})$-homogeneous bundles. But
we can form the Penrose transform of each irreducible subfactor with the 
following results.
\begin{proposition}\label{one}
There is an exact sequence
$$0\to\Gamma({\mathbb{CP}}_3,\xoo{-2}{1}{0})\to
\Gamma\Big({\mathbb{CP}}_3,\!\!\!\!
\begin{array}c\xoo{-1}{1}{1}\\[-5pt] \oplus\\[-1pt] 
\xoo{-4}{2}{0}\end{array}\!\!\Big)\to
\Gamma\Big({\mathbb{CP}}_3,\!\!\!\!
\begin{array}c\xoo{0}{2}{0}\\[-5pt] \oplus\\[-1pt] 
\xoo{-3}{2}{1}\end{array}\!\!\Big)\to
\Gamma({\mathbb{CP}}_3,\xoo{-2}{3}{0})\to 0.$$
\end{proposition}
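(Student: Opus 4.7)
The plan is to apply the spectral sequence~(\ref{ss}) with coefficients in one of the irreducible subfactors $V$ of the tangent bundle $\Theta$ from~(\ref{Theta}), exhibit the displayed four-term sequence as its $E_1$-page, and argue that this page collapses to an exact sequence. The computation runs entirely parallel to the one performed at the end of Section~\ref{spectral} that produced~(\ref{lopsided}) from the trivial bundle; only the input bundle has changed.

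First I would pull $V$ back to $\mathbb{G}$ via~(\ref{pullback}) and tensor it with each of the relative forms $\Lambda_\mu^{p,0}$ listed in~(\ref{onG}), using Littlewood--Richardson to decompose the result into a sum of irreducible $\mathbb{G}$-homogeneous bundles $\xxo{a}{b}{c}_{\mathbb{G}}$. Next I would push these forward along $\nu:\mathbb{G}\to\mathbb{M}$ by applying Bott--Borel--Weil fibrewise to the $\mathbb{CP}_1$-fibres, whose direction is controlled by the second simple root. In this fibrewise BBW, each summand either descends without shift when the middle label is nonnegative (contributing to $\nu_*^0$), reflects across the second wall when the middle label is at most~$-2$ (contributing to $\nu_*^1$), or vanishes entirely in the singular case when the middle label equals $-1$. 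Restricting the resulting $\mathbb{M}$-homogeneous bundles to the real slice ${\mathbb{CP}}_3\hookrightarrow\mathbb{M}$ via the diagram~(\ref{fullembedding}) then reads off the $E_1$-page.

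To conclude exactness, I must show that the abutment $H^r({\mathbb{F}},{\mathcal{O}}(V))$ vanishes in every degree. This is a single Bott--Borel--Weil computation on $\mathbb{F}$: for the correct choice among the three subfactors in~(\ref{Theta}), the $\rho$-shifted highest weight of $V$ lies on a wall of the Weyl chamber, so every cohomology of $V$ on $\mathbb{F}$ is zero. The spectral sequence then degenerates and the $E_1$-page is forced to be exact, which is precisely the assertion of Proposition~\ref{one}.

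The principal obstacle is the representation-theoretic bookkeeping after tensoring $\mu^*V$ with each $\Lambda_\mu^{p,0}$: one must verify that every summand falling into the singular middle-weight range really disappears under $\nu_*$, that the surviving summands assemble into exactly the six bundles displayed, and that the differentials in the sequence agree with those produced by the spectral sequence. No single step is conceptually deep, but the accuracy of the sign conventions established by~(\ref{pullback}) and~(\ref{onG}) is essential, and a misstep at any stage would manifest as an extra summand or a missing term.
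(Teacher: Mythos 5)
Your strategy is the paper's own: the displayed sequence is the Penrose transform of the subfactor $\xxo{2}{-1}{0}_{\mathbb{F}}$ from~(\ref{Theta}), which is singular (its weight augmented by $\rho$ has a zero entry), so all its cohomology on ${\mathbb{F}}$ vanishes; pulling back by~(\ref{pullback}) gives the line bundle $\xxo{-2}{1}{0}_{\mathbb{G}}$, tensoring with~(\ref{onG}) produces the relative de~Rham complex, and since every summand admits only a $0^{\mathrm{th}}$ direct image the $E_1$-page is a single row whose exactness (including injectivity at the left and surjectivity at the right) is forced by the vanishing abutment. Incidentally, no Littlewood--Richardson computation is needed: $\xxo{-2}{1}{0}_{\mathbb{G}}$ has rank one, so tensoring merely shifts the first two labels of each summand in~(\ref{onG}).

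One step of your direct-image recipe is stated incorrectly, though it happens not to affect this proposition. The fibres of $\nu:{\mathbb{G}}\to{\mathbb{M}}$ (equivalently of $\tau$) are not ${\mathbb{CP}}_1$: for $n=3$ they are ${\mathbb{CP}}_2$, since the cross being removed sits on the second node and the relevant Levi factor spans nodes $2$ and $3$, an $A_2$. The fibrewise Bott--Borel--Weil analysis therefore involves the relative Weyl group of $A_2$, not a single reflection: a summand $\xxo{p}{q}{r}_{\mathbb{G}}$ with $q\leq -2$ may contribute to $\nu_*^1$, contribute to $\nu_*^2$, or vanish, according to the sign of $q+r+2$, whereas your rule assigns all such summands to $\nu_*^1$. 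You escape unharmed here only because every summand of $\Lambda_\mu^{p,0}\otimes\xxo{-2}{1}{0}_{\mathbb{G}}$ has middle label at least $1$, so only $\nu_*^0$ occurs and your criterion agrees with the correct one on that range; in Propositions~\ref{two} and~\ref{three}, and in the general ${\mathbb{CP}}_n$ setting, the stated rule would not be safe.
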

\begin{proof}
As usual~\cite{beastwood}, the spectral sequence (\ref{ss}) may be generalised
to incorporate a holomorphic vector bundle on ${\mathbb{F}}$ in the
coefficients and the discussion of \S\ref{spectral} is easily modified to
calculate the appropriate direct images. Specifically, the exact sequence we
are aiming for arises as the Penrose of the vector bundle
$\xxo{2}{-1}{0}_{\mathbb{F}}$. As a singular bundle, all its holomorphic
cohomology vanishes. To incorporate it into the Penrose transform, we firstly
use (\ref{pullback}) to conclude that
$$\mu^*\xxo{2}{-1}{0}_{\mathbb{F}}=\xxo{-2}{1}{0}_{\mathbb{G}}.$$
The modified spectral sequence is therefore
$$E_1^{p,q}=\Gamma({\mathbb{CP}}_3,\tau_*^q\Lambda_\mu^{p,0}(\xxo{-2}{1}{0}))
\Longrightarrow H^{p+q}({\mathbb{F}},\xxo{2}{-1}{0})=0.$$
{From} (\ref{onG}) we see that on~${\mathbb{G}}$
$$\Lambda_\mu^{\bullet,0}(\xxo{-2}{1}{0})=
\xxo{-2}{1}{0}_{\mathbb{G}}\to
\begin{array}c\xoo{-1}{1}{1}_{\mathbb{G}}\\[-5pt] \oplus\\[-1pt] 
\xoo{-4}{2}{0}_{\mathbb{G}}\end{array}\to
\begin{array}c\xoo{0}{2}{0}_{\mathbb{G}}\\[-5pt] \oplus\\[-1pt] 
\xoo{-3}{2}{1}_{\mathbb{G}}\end{array}\to
\xoo{-2}{3}{0}_{\mathbb{G}},$$
and the required conclusion follows by taking direct images as 
in~\cite{beastwood}.
\end{proof}
\begin{proposition}\label{two}
There is an exact sequence
$$0\to\Gamma({\mathbb{CP}}_3,\xoo{1}{0}{1})\to
\Gamma\Big({\mathbb{CP}}_3,\!\!\!\!
\begin{array}c\xoo{2}{0}{2}\\[-5pt] \oplus\\[-1pt] 
\xoo{2}{1}{0}\\[-5pt] \oplus\\[-1pt]\xoo{-1}{1}{1}\end{array}\!\!\Big)\to
\Gamma\Big({\mathbb{CP}}_3,\!\!
\begin{array}c\xoo{3}{1}{1}\\[-5pt] \oplus\\[-1pt] 
\xoo{0}{1}{2}\\[-5pt] \oplus\\[-1pt]\xoo{0}{2}{0}\end{array}\!\!\Big)\to
\Gamma({\mathbb{CP}}_3,\xoo{1}{2}{1})\to 0.$$
\end{proposition}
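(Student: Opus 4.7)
The plan is to emulate the proof of Proposition~\ref{one}, now with the bundle $\xxo{1}{0}{1}_{\mathbb{F}}$ as the coefficient in the spectral sequence~(\ref{ss}). The pullback formula (\ref{pullback}) gives
$$\mu^*\xxo{1}{0}{1}_{\mathbb{F}} = \xxo{-1}{1}{1}_{\mathbb{G}},$$
so the modified spectral sequence is
$$E_1^{p,q}=\Gamma({\mathbb{CP}}_3,\tau_*^q\Lambda_\mu^{p,0}(\xxo{-1}{1}{1}))\Longrightarrow H^{p+q}({\mathbb{F}},\xxo{1}{0}{1}_{\mathbb{F}}).$$
I form the twisted relative de Rham complex $\Lambda_\mu^{\bullet,0}(\xxo{-1}{1}{1})$ on~${\mathbb{G}}$ by tensoring~(\ref{onG}) with $\xxo{-1}{1}{1}_{\mathbb{G}}$, decomposing each tensor product into irreducible ${\mathrm{SL}}(4,{\mathbb{C}})$-homogeneous bundles via Clebsch--Gordan on the ${\mathrm{SL}}(2,{\mathbb{C}})$-factor of the Levi. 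A count then shows $1,3,3,1$ irreducible summands at $p=0,1,2,3$, matching the shape of the sequence in the statement.

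Next I take direct images $\nu_*^q$ of each summand by Bott's theorem along the ${\mathbb{CP}}_2$-fibres of~$\nu:{\mathbb{G}}\to{\mathbb{M}}$, using the tables of~\cite{beastwood}. The key verification---exactly as in Proposition~\ref{one}---is that for each~$p$ all nonzero direct images lie in a single row of the $E_1$-page; then $E_2=E_\infty$ and the spectral sequence degenerates to a chain complex of linear differential operators on~${\mathbb{CP}}_3$ whose cohomology is the abutment $H^{p+q}({\mathbb{F}},\xxo{1}{0}{1}_{\mathbb{F}})$.

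By Bott--Borel--Weil on~${\mathbb{F}}$ the shifted weight $(1,0,1)+\rho=(2,1,2)$ is strictly dominant, so this abutment is the adjoint representation $V_{(1,0,1)}={\mathfrak{sl}}(4,{\mathbb{C}})$ in degree~$0$ (with higher cohomology vanishing), reflecting the infinitesimal ${\mathrm{SL}}(4,{\mathbb{C}})$-action transmitted through the quotient $\Theta\to\xxo{1}{0}{1}_{\mathbb{F}}$ of~(\ref{Theta}). Relabelling the surviving summands from the ${\mathbb{G}}$-conventions of~(\ref{weirdchoice}) into the ${\mathbb{M}}$-conventions of~(\ref{M}) then identifies the resulting complex as the sequence in the statement.

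The main obstacle is representation-theoretic bookkeeping: six tensor product decompositions, eight direct image calculations by Bott's theorem, verification of the common-row property, and the parabolic re-labelling. Each step is a routine application of the machinery of~\cite{beastwood}, but the volume of explicit data is what makes the proof substantive.
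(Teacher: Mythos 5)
You have set up the Penrose transform of the wrong bundle. Proposition~\ref{two} is the transform of $\xxo{-1}{1}{1}_{\mathbb{F}}$, the second irreducible summand of the sub-bundle in~(\ref{Theta}); by the pullback rule~(\ref{pullback}) this satisfies $\mu^*\xxo{-1}{1}{1}_{\mathbb{F}}=\xxo{1}{0}{1}_{\mathbb{G}}$ --- note that the rule \emph{swaps} these two weights, which is probably the source of the confusion. You have instead taken the quotient bundle $\xxo{1}{0}{1}_{\mathbb{F}}$ of~(\ref{Theta}), whose pullback is $\xxo{-1}{1}{1}_{\mathbb{G}}$; that is precisely the input to Proposition~\ref{three}, not Proposition~\ref{two}. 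Concretely, with your choice the $p=0$ term of the relative de Rham complex is $\xxo{-1}{1}{1}_{\mathbb{G}}$, whose zeroth direct image is $\xoo{-1}{1}{1}$, so your complex would begin with $\Gamma({\mathbb{CP}}_3,\xoo{-1}{1}{1})$ rather than the stated $\Gamma({\mathbb{CP}}_3,\xoo{1}{0}{1})$, and carrying out the tensor decompositions would reproduce the bundles of Proposition~\ref{three} throughout.

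Your own computation of the abutment exposes the problem: you correctly find $H^0({\mathbb{F}},\xxo{1}{0}{1}_{\mathbb{F}})={\mathfrak{sl}}(4,{\mathbb{C}})$, but then the resulting complex cannot be exact at the first position, whereas the sequence in Proposition~\ref{two} begins with $0\to\Gamma({\mathbb{CP}}_3,\xoo{1}{0}{1})$. The exactness there rests on the fact that $\xxo{-1}{1}{1}_{\mathbb{F}}$ is a \emph{singular} bundle (its $\rho$-shifted weight has a vanishing entry), so that $H^r({\mathbb{F}},\xxo{-1}{1}{1}_{\mathbb{F}})=0$ for all $r$ and the spectral sequence degenerates to an everywhere-exact sequence. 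The rest of your outline --- tensoring~(\ref{onG}) with the pulled-back bundle, decomposing into irreducibles, checking that only zeroth direct images occur, and relabelling on ${\mathbb{M}}$ --- is the right machinery and does match the paper's method; it just needs to be applied to $\xxo{1}{0}{1}_{\mathbb{G}}$, for which one finds, e.g., $\Lambda_\mu^{1,0}\otimes\xxo{1}{0}{1}_{\mathbb{G}}=\xxo{2}{0}{2}\oplus\xxo{2}{1}{0}\oplus\xxo{-1}{1}{1}$, giving the middle terms of the stated sequence.
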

\begin{proof} Noting that 
$$\mu^*\xxo{-1}{1}{1}_{\mathbb{F}}=\xxo{1}{0}{1}_{\mathbb{G}},$$
the required conclusion is the Penrose transform of the singular
bundle~$\xxo{-1}{1}{1}_{\mathbb{F}}$. The extra bundles arise via various
tensor decompositions, such as
$$\Lambda_\mu^{1,0}\otimes\mu^*\xxo{-1}{1}{1}_{\mathbb{F}}\enskip=
\begin{array}c\xxo{1}{0}{1}_{\mathbb{G}}\\[-5pt] \oplus\\[-1pt]
\xxo{-2}{1}{0}_{\mathbb{G}}\end{array}\!\!\!\otimes\xxo{1}{0}{1}\enskip=
\begin{array}c\xxo{2}{0}{2}\\[-5pt] \oplus\\[-1pt]
\xxo{2}{1}{0}\\[-5pt] \oplus\\[-1pt]\xxo{-1}{1}{1}.\end{array}$$
Again, there are only $0^{\mathrm{th}}$ direct images in the spectral sequence.
\end{proof}
The remaining irreducible bundle from (\ref{Theta}) is 
$\xxo{1}{0}{1}_{\mathbb{F}}$ and its Penrose transform is as follows.
\begin{proposition}\label{three}
There is a exact sequence of  ${\mathrm{SL}}(4,{\mathbb{C}})$-modules
$$\begin{array}c\makebox[0pt][r]{$0\to\;$}
{\mathfrak{sl}}(4,{\mathbb{C}})\\
\downarrow\\
\Gamma({\mathbb{CP}}_3,\xoo{-1}{1}{1})\\ \mbox{ }\\ \mbox{ }\end{array}\to
\Gamma\Big({\mathbb{CP}}_3,\!\!\!\!
\begin{array}c\xoo{0}{1}{2}\\[-5pt] \oplus\\[-1pt] 
\xoo{0}{2}{0}\\[-5pt] \oplus\\[-1pt]\xoo{-3}{2}{1}\end{array}\!\!\Big)\to
\Gamma\Big({\mathbb{CP}}_3,\!\!\!\!
\begin{array}c\xoo{1}{2}{1}\\[-5pt] \oplus\\[-1pt] 
\xoo{-2}{2}{2}\\[-5pt] \oplus\\[-1pt]\xoo{-2}{3}{0}\end{array}\!\!\Big)\to
\Gamma({\mathbb{CP}}_3,\xoo{-1}{3}{1})\to 0.$$
\end{proposition}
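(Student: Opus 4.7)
The plan is to apply the Penrose transform machinery of Section~\ref{spectral} to the irreducible bundle $\xxo{1}{0}{1}_{\mathbb{F}}$, following the same pattern as Propositions~\ref{one} and~\ref{two}, but now with a nonzero abutment for the spectral sequence.

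First I would use (\ref{pullback}) to obtain $\mu^*\xxo{1}{0}{1}_{\mathbb{F}}=\xxo{-1}{1}{1}_{\mathbb{G}}$, and then form on~${\mathbb{G}}$ the complex $\Lambda_\mu^{\bullet,0}(\xxo{-1}{1}{1})$ by tensoring (\ref{onG}) with $\xxo{-1}{1}{1}_{\mathbb{G}}$ and decomposing each resulting tensor product into irreducibles of the Levi factor by Littlewood-Richardson. For example, expanding $\Lambda_\mu^{1,0}\otimes\xxo{-1}{1}{1}_{\mathbb{G}}=(\xxo{1}{0}{1}\oplus\xxo{-2}{1}{0})\otimes\xxo{-1}{1}{1}$ should produce precisely the three summands whose zeroth direct images under $\nu$ yield $\xoo{0}{1}{2}\oplus\xoo{0}{2}{0}\oplus\xoo{-3}{2}{1}$; the other slots of the sequence are obtained analogously from $\Lambda_\mu^{0,0}$, $\Lambda_\mu^{2,0}$, and $\Lambda_\mu^{3,0}$ tensored with $\xxo{-1}{1}{1}_{\mathbb{G}}$. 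I would then compute $\nu_*^q$ summand by summand via the fibrewise Bott-Borel-Weil recipes of~\cite{beastwood}, expecting (as in the earlier propositions) that only the zeroth direct images survive, so that the $E_1$-page of (\ref{ss}) collapses to a single row of four terms once restricted to~${\mathbb{CP}}_3\hookrightarrow{\mathbb{M}}$.

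Unlike the singular bundles treated in Propositions~\ref{one} and~\ref{two}, the weight $(1,0,1)$ is regular: adding $\rho=(1,1,1)$ yields $(2,1,2)$, which is strictly dominant. Bott-Borel-Weil on~${\mathbb{F}}$ therefore gives $H^0({\mathbb{F}},\xxo{1}{0}{1})={\mathfrak{sl}}(4,{\mathbb{C}})$ (the adjoint representation, whose highest weight for $A_3$ is $\omega_1+\omega_3$) with all higher cohomology vanishing. The collapsed spectral sequence consequently fits into a four-term exact sequence with $\mathfrak{sl}(4,{\mathbb{C}})$ as the kernel of the first differential, which is exactly the stated conclusion.

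The main obstacle is the bookkeeping in the second step: I must verify that no irreducible summand arising from the tensor decompositions has fibrewise middle-weight in the range that produces a nonzero $\nu_*^q$ for $q\geq 1$, since any such contribution would add an extra row to the $E_1$-page and spoil the clean single-row collapse. This is a delicate but essentially combinatorial check, parallel to the (tacit) one performed in Propositions~\ref{one} and~\ref{two}, and it is the only non-automatic part of the argument.
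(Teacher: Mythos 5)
Your proposal is correct and follows essentially the same route as the paper's proof: pull back $\xxo{1}{0}{1}_{\mathbb{F}}$ to $\xxo{-1}{1}{1}_{\mathbb{G}}$ via (\ref{pullback}), resolve by $\Lambda_\mu^{\bullet,0}$ tensored with this bundle, take (purely zeroth) direct images to get the single-row $E_1$-page, and identify the abutment as ${\mathfrak{sl}}(4,{\mathbb{C}})$ in degree zero by Bott--Borel--Weil. Your explicit regularity check of the weight $(1,0,1)$ and your flagged verification that no higher direct images occur are exactly the points the paper leaves tacit.
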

\begin{proof}Apply the standard machinery~\cite{beastwood} to compute
$$E_1^{p,q}=
\Gamma({\mathbb{CP}}_3,
\tau_*^q\Lambda_\mu^{p,0}(\mu^*\xxo{1}{0}{1}_{\mathbb{F}}))=
\Gamma({\mathbb{CP}}_3,
\tau_*^q\Lambda_\mu^{p,0}(\xxo{-1}{1}{1}))
\Longrightarrow H^{p+q}({\mathbb{F}},\xxo{1}{0}{1}_{\mathbb{F}}).$$
By the Bott-Borel-Weil Theorem, in the conventions of~\cite{beastwood}, 
$$H^r({\mathbb{F}},\xxo{1}{0}{1}_{\mathbb{F}})=\Big\{
\begin{array}l\ooo{1}{0}{1}
={\mathfrak{sl}}(4,{\mathbb{C}})\enskip\mbox{if }r=0\\
0\enskip\mbox{if }r\geq 1
\end{array}$$
and the required exact sequence emerges.\end{proof}
Propositions \ref{one}, \ref{two}, and~\ref{three} may be combined to give the 
following result for the Penrose transform of 
$H^r({\mathbb{F}}_{1,2}({\mathbb{C}}^{4}))$ in terms of differential operators 
on~${\mathbb{CP}}_3$.   
\begin{theorem}\label{uglytheorem}
There is a complex of differential operators
on~${\mathbb{CP}}_3$ 
$$\raisebox{-30pt}{\begin{picture}(417,102)(10,0)
\put(10,65){\makebox(0,0){$0$}}
\put(15,65){\vector(1,0){14}}
\put(15,62){\vector(2,-3){14}} 
\put(40,65){\makebox(0,0){$\Lambda^{0,1}$}}
\put(40,35){\makebox(0,0){$\Lambda^{1,0}$}}
\put(40,50){\makebox(0,0){$\oplus$}} 
\put(55,68){\vector(1,1){24}}
\put(55,65){\vector(1,0){24}}
\put(55,62){\vector(1,-1){24}} 
\put(55,35){\vector(1,0){24}}
\put(55,32){\vector(1,-1){24}}
\put(65,23){\scriptsize$\partial$}
\put(65,53){\scriptsize$\partial$}
\put(65,67){\scriptsize$\overline\partial$} 
\put(65,85){\scriptsize$\overline\partial$} 
\put(65,37){\scriptsize$\overline\partial$}
\put(110,5){\makebox(0,0){$\begin{picture}(10,5)
\put(0,0){\line(1,0){10}} 
\put(0,5){\line(1,0){10}} 
\put(0,0){\line(0,1){5}}
\put(5,0){\line(0,1){5}} 
\put(10,0){\line(0,1){5}}
\end{picture}\,\Lambda^{1,0}$}}
\put(110,35){\makebox(0,0){$\Lambda^{0,1}\!\otimes_\perp\!\Lambda^{1,0}$}}
\put(110,65){\makebox(0,0){$\begin{picture}(10,5)
\put(0,0){\line(1,0){10}} 
\put(0,5){\line(1,0){10}} 
\put(0,0){\line(0,1){5}}
\put(5,0){\line(0,1){5}} 
\put(10,0){\line(0,1){5}}
\end{picture}\,\Lambda^{0,1}$}}
\put(110,95){\makebox(0,0){$\begin{picture}(5,10)
\put(0,0){\line(1,0){5}} 
\put(0,5){\line(1,0){5}} 
\put(0,10){\line(1,0){5}}
\put(0,0){\line(0,1){10}} 
\put(5,0){\line(0,1){10}}
\end{picture}\,\Lambda^{0,1}$}}
\put(110,20){\makebox(0,0){$\oplus$}} 
\put(110,50){\makebox(0,0){$\oplus$}}
\put(110,80){\makebox(0,0){$\oplus$}}
\put(140,95){\vector(1,0){24}}
\put(140,63){\vector(1,1){24}}
\put(140,60){\vector(3,-2){70}} 
\put(140,32){\vector(3,-1){70}} 
\put(140,5){\vector(1,0){70}}
\put(140,35){\vector(3,2){32}}
\put(140,88){\vector(3,-2){32}}
\put(150,97){\scriptsize$\overline\partial$}
\put(143,72){\scriptsize$\overline\partial$}
\put(144,42){\scriptsize$\overline\partial$}
\put(163,74){\scriptsize$\partial$}
\put(150,7){\scriptsize$\overline\partial^{(2)}$}
\put(165,24){\scriptsize$\partial\overline\partial$} 
\put(180,35){\scriptsize$\partial^{(2)}$} 
\put(190,95){\makebox(0,0){$\begin{picture}(10,10)
\put(0,0){\line(1,0){5}} 
\put(0,5){\line(1,0){10}} 
\put(0,10){\line(1,0){10}}
\put(0,0){\line(0,1){10}} 
\put(5,0){\line(0,1){10}}
\put(10,5){\line(0,1){5}}
\end{picture}\,\Lambda^{0,1}$}}
\put(195,80){\makebox(0,0){$\oplus$}}
\put(210,65){\makebox(0,0){$\begin{picture}(5,10)
\put(0,0){\line(1,0){5}} 
\put(0,5){\line(1,0){5}} 
\put(0,10){\line(1,0){5}}
\put(0,0){\line(0,1){10}} 
\put(5,0){\line(0,1){10}}
\end{picture}\,\Lambda^{0,1}\!\otimes_\perp\!\Lambda^{1,0}$}}
\put(230,35){\makebox(0,0){$\oplus$}}
\put(255,5){\makebox(0,0){$\begin{picture}(10,5) 
\put(0,0){\line(1,0){10}}
\put(0,5){\line(1,0){10}} 
\put(0,0){\line(0,1){5}} 
\put(5,0){\line(0,1){5}}
\put(10,0){\line(0,1){5}}
\end{picture}\,\Lambda^{0,1}\otimes_\perp\begin{picture}(10,5)
\put(0,0){\line(1,0){10}} 
\put(0,5){\line(1,0){10}} 
\put(0,0){\line(0,1){5}}
\put(5,0){\line(0,1){5}} 
\put(10,0){\line(0,1){5}}
\end{picture}\,\Lambda^{1,0}$}}
\put(300,5){\vector(1,0){20}}
\put(309,8){\scriptsize$\overline\partial$}
\put(365,5){\makebox(0,0){$\begin{picture}(10,10) 
\put(0,5){\line(1,0){10}}
\put(0,10){\line(1,0){10}} 
\put(0,0){\line(0,1){10}} 
\put(5,0){\line(0,1){10}}
\put(10,5){\line(0,1){5}}
\put(0,0){\line(1,0){5}}
\end{picture}\,\Lambda^{0,1}\otimes_\perp\begin{picture}(10,5)
\put(0,0){\line(1,0){10}} 
\put(0,5){\line(1,0){10}} 
\put(0,0){\line(0,1){5}}
\put(5,0){\line(0,1){5}} 
\put(10,0){\line(0,1){5}}
\end{picture}\,\Lambda^{1,0}$}}
\put(410,5){\vector(1,0){10}}
\put(427,5){\makebox(0,0){$0$}}
\end{picture}}$$
whose {global\/} $0^{\mathrm{th}}$ cohomology is
${\mathfrak{sl}}(4,{\mathbb{C}})$ and is otherwise exact. The operators in this
complex are the natural ones induced by the Fubini-Study
connection.\end{theorem}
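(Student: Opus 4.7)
The plan is to apply the Penrose spectral sequence~(\ref{ss}) with coefficients in the full holomorphic tangent bundle $\Theta$ of $\mathbb{F}$, and to combine the three partial results already obtained in Propositions~\ref{one}, \ref{two} and~\ref{three}. The abutment $H^{p+q}(\mathbb{F}, \Theta)$ is easily identified using~(\ref{Theta}): of the three irreducible subfactors only $\xxo{1}{0}{1}_\mathbb{F}$ has non-vanishing cohomology, contributing $\ooo{1}{0}{1}=\mathfrak{sl}(4,\mathbb{C})$ in degree zero and nothing in higher degrees. This immediately accounts for the asserted global $0^{\text{th}}$ cohomology.

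Pulling~(\ref{Theta}) back along~$\mu$ and tensoring with the bundles $\Lambda_\mu^{\bullet,0}$ gives a short exact sequence of complexes of homogeneous bundles on~$\mathbb{G}$. Since, as noted in the proofs of the three propositions, only the zeroth direct image $\tau_*^0$ contributes on each subfactor, the long exact sequence of direct images collapses into short exact sequences on~$\mathbb{CP}_3$, and the terms of the Penrose complex for $\Theta$ are built as extensions of the corresponding terms from Propositions~\ref{one}, \ref{two} and~\ref{three}. Decomposing these extensions into $\mathrm{SU}(4)$-irreducibles and translating via the dictionary employed in passing from~(\ref{onG}) to~(\ref{lopsided}) (and visible also in~(\ref{asd_Hermitian})) produces precisely the bundles displayed in the diagram of the theorem.

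The differentials then split into a ``diagonal'' part, which on each subfactor summand recovers the operators of Propositions~\ref{one}, \ref{two} and~\ref{three} and so furnishes the $\partial$, $\overline\partial$, $\partial^{(2)}$, $\overline\partial^{(2)}$ and $\partial\overline\partial$ arrows of the diagram, together with ``off-diagonal'' connecting operators produced by the non-split extension class of~(\ref{Theta}). Because this class is $\mathrm{SL}(4,\mathbb{C})$-equivariant, the connecting operators are forced to be $\mathrm{SU}(4)$-invariant differential operators between the prescribed homogeneous bundles, and invariance together with Bott-Borel-Weil uniqueness for invariant operators of the required type pins them down as the natural operators induced by the Fubini-Study connection. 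The main obstacle is this last identification: one must verify that the connecting operators are genuinely non-zero and have the precise normalisation needed for $d \circ d = 0$. Once this is settled, convergence of the spectral sequence to the abutment computed in the first paragraph yields the asserted exactness away from degree zero.
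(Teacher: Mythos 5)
Your starting point coincides with the paper's: run the Penrose transform on $\Theta$ via the filtration (\ref{Theta}), observe that only the subfactor $\xxo{1}{0}{1}_{\mathbb{F}}$ contributes to the abutment (namely ${\mathfrak{sl}}(4,{\mathbb{C}})$ in degree $0$), and assemble the $E_1$-terms from the three propositions. The computation of the global cohomology is fine. The genuine gap is at the step where you claim that decomposing the resulting extensions into irreducibles ``produces precisely the bundles displayed in the diagram of the theorem.'' It does not: decomposing only recovers the direct sum of the terms of Propositions \ref{one}, \ref{two} and \ref{three}, which column by column gives $3,8,8,3$ irreducible summands in degrees $0,1,2,3$, whereas the complex in the theorem has $2,4,3,1$. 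For example $\xoo{-1}{1}{1}$ occurs in degree $0$ (from Proposition \ref{three}) and again in degree $1$ (from Propositions \ref{one} and \ref{two}), and $\xoo{0}{1}{2}$, $\xoo{0}{2}{0}$, $\xoo{-3}{2}{1}$, $\xoo{1}{2}{1}$, $\xoo{-2}{3}{0}$ each occur in two adjacent degrees. What your construction yields is a different, much larger complex with the same cohomology, not the one asserted.

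The missing idea is the cancellation that the paper carries out. Because (\ref{Theta}) does not split, $\partial_\mu$ on $\Lambda_\mu^{\bullet,0}(\mu^*\Theta)$ has components coupling the subfactors, and on each of the six repeated bundles the relevant component is a bundle isomorphism (in fact the identity). A diagram chase then eliminates each repeated pair at the cost of introducing the second-order operators $\partial^{(2)}$, $\overline\partial^{(2)}$, $\partial\overline\partial$ of the statement, as compositions of a first-order operator with the inverse of such an isomorphism and another first-order operator --- a Bernstein--Gelfand--Gelfand-type contraction. Your ``off-diagonal connecting operators produced by the extension class'' are the right objects, but you treat them only as additional first-order arrows to be pinned down by invariance; you must instead recognise that some of them have order zero and are invertible, and use them to contract the complex. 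Without that step neither the list of bundles nor the placement of the second-order operators can be obtained, and the appeal to uniqueness of invariant operators cannot repair the argument because the terms themselves are wrong.
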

\begin{proof}
To compute the Penrose transform of $H^r({\mathbb{F}},\Theta)$ we need to 
identify the complex $\Lambda^{\bullet,0}(\mu^*\Theta)$ and compute direct 
images $\tau_*^q\Lambda^{p,0}(\mu^*\Theta)$ down on~${\mathbb{CP}}_3$. {From} 
(\ref{Theta}) and (\ref{onG}) we obtain
$$\begin{array}{ccccccc}
\Lambda^{0,0}(\mu^*\Theta)&\xrightarrow{\;\partial_\mu\;}&
\Lambda^{1,0}(\mu^*\Theta)&\xrightarrow{\;\partial_\mu\;}&
\Lambda^{2,0}(\mu^*\Theta)&\xrightarrow{\;\partial_\mu\;}&
\Lambda^{3,0}(\mu^*\Theta)\\
\|&&\|&&\|&&\|\\
&&\xxo{0}{1}{2}\makebox[0pt][l]{$\;*$}
&&\xxo{1}{2}{1}\makebox[0pt][l]{$\;*$}\\[-5pt]
&&\oplus&&\oplus\\[-1pt]
\xxo{-1}{1}{1}\makebox[0pt][l]{$\;*$}&&\xxo{0}{2}{0}\makebox[0pt][l]{$\;*$}
&&\xxo{-2}{2}{2}&&\xxo{-1}{3}{1}\\[-5pt]
&&\oplus&&\oplus\\[-1pt]
&&\xxo{-3}{2}{1}\makebox[0pt][l]{$\;*$}
&&\xxo{-2}{3}{0}\makebox[0pt][l]{$\;*$}\\[-5pt]
\oplus&&\oplus&&\oplus&&\oplus\\[-1pt]
&&\xxo{2}{0}{2}&&\xxo{3}{1}{1}\\[-5pt]
&&\oplus&&\oplus\\[-1pt]
\xxo{1}{0}{1}&&\xxo{2}{1}{0}&&\xxo{0}{1}{2}&&\xxo{1}{2}{1}\\[-5pt]
&&\oplus&&\oplus\\[-1pt]
\oplus&&\xxo{-1}{1}{1}&&\xxo{0}{2}{0}&&\oplus\\[-5pt]
&&\oplus&&\oplus\\[-1pt]
\xxo{-2}{1}{0}&&\xxo{-1}{1}{1}&&\xxo{0}{2}{0}&&\xxo{-2}{3}{0}.\\[-5pt]
&&\oplus&&\oplus\\[-1pt]
&&\xxo{-4}{2}{0}&&\xxo{-3}{2}{1}
\end{array}$$
Notice that all the bundles marked $*$ are repeated in the next column. A more
detailed analysis shows that, when restricted to these bundles and an
appropriate target bundle, the differential operator $\partial_\mu$ is simply
the identity mapping. Also, notice that all the bundles have only
$0^{\mathrm{th}}$ direct images down on~${\mathbb{CP}}_3$. Diagram chasing,
either on ${\mathbb{G}}$ or down on~${\mathbb{CP}}_3$, now shows that all the
bundles labelled $*$ can be eliminated from the resulting complex at the
expense of introducing second order operators. (This construction is similar to
that of the Bernstein-Gelfand-Gelfand operators in \cite{beastwood} but there
appears to be no precise link.) The complex in the statement of the theorem is
obtained by using the more traditional notation for irreducible Hermitian
bundles (the differential operators in this complex are the only possible
Hermitian-invariant ones). Its cohomology realises $H^r({\mathbb{F}},\Theta)$.
But from (\ref{Theta}) and the algorithms of \cite{beastwood} it
follows that $H^0({\mathbb{F}},\Theta)=\ooo{1}{0}{1}$ and all higher cohomology
vanishes.
\end{proof}
The complex in Theorem~\ref{uglytheorem} is somewhat ugly compared to the
pleasing complex (\ref{asd_Hermitian}) on~${\mathbb{CP}}_2$. There does appear
to be a more satisfying analogue as follows.
$$\raisebox{-30pt}{\begin{picture}(417,70)(10,0)
\put(10,65){\makebox(0,0){$0$}}
\put(15,65){\vector(1,0){14}}
\put(15,62){\vector(2,-3){14}} 
\put(40,65){\makebox(0,0){$\Lambda^{0,1}$}}
\put(40,35){\makebox(0,0){$\Lambda^{1,0}$}}
\put(40,50){\makebox(0,0){$\oplus$}} 
\put(55,65){\vector(1,0){24}}
\put(55,62){\vector(1,-1){24}} 
\put(55,35){\vector(1,0){24}}
\put(55,32){\vector(1,-1){24}}
\put(110,5){\makebox(0,0){$\begin{picture}(10,5)
\put(0,0){\line(1,0){10}} 
\put(0,5){\line(1,0){10}} 
\put(0,0){\line(0,1){5}}
\put(5,0){\line(0,1){5}} 
\put(10,0){\line(0,1){5}}
\end{picture}\,\Lambda^{1,0}$}}
\put(110,35){\makebox(0,0){$\Lambda^{0,1}\!\otimes_\perp\!\Lambda^{1,0}$}}
\put(110,65){\makebox(0,0){$\begin{picture}(10,5)
\put(0,0){\line(1,0){10}} 
\put(0,5){\line(1,0){10}} 
\put(0,0){\line(0,1){5}}
\put(5,0){\line(0,1){5}} 
\put(10,0){\line(0,1){5}}
\end{picture}\,\Lambda^{0,1}$}}
\put(110,20){\makebox(0,0){$\oplus$}} 
\put(110,50){\makebox(0,0){$\oplus$}}
\put(140,60){\vector(3,-2){70}} 
\put(140,64){\vector(1,0){70}} 
\put(140,62){\vector(3,-1){70}} 
\put(140,32){\vector(3,-1){70}} 
\put(140,34){\vector(1,0){70}} 
\put(140,5){\vector(1,0){70}}
\put(255,20){\makebox(0,0){$\oplus$}}
\put(255,50){\makebox(0,0){$\oplus$}}
\put(255,5){\makebox(0,0){$\begin{picture}(10,5) 
\put(0,0){\line(1,0){10}}
\put(0,5){\line(1,0){10}} 
\put(0,0){\line(0,1){5}} 
\put(5,0){\line(0,1){5}}
\put(10,0){\line(0,1){5}}
\end{picture}\,\Lambda^{0,1}\otimes_\perp\begin{picture}(10,5)
\put(0,0){\line(1,0){10}} 
\put(0,5){\line(1,0){10}} 
\put(0,0){\line(0,1){5}}
\put(5,0){\line(0,1){5}} 
\put(10,0){\line(0,1){5}}
\end{picture}\,\Lambda^{1,0}$}}
\put(255,35){\makebox(0,0){$\begin{picture}(10,10) 
\put(0,5){\line(1,0){10}}
\put(0,10){\line(1,0){10}} 
\put(0,0){\line(0,1){10}} 
\put(5,0){\line(0,1){10}}
\put(10,5){\line(0,1){5}}
\put(0,0){\line(1,0){5}}
\end{picture}\,\Lambda^{0,1}\otimes_\perp
\Lambda^{1,0}$}}
\put(255,65){\makebox(0,0){$\begin{picture}(10,10) 
\put(0,0){\line(1,0){10}}
\put(0,5){\line(1,0){10}} 
\put(0,0){\line(0,1){10}} 
\put(5,0){\line(0,1){10}}
\put(10,0){\line(0,1){10}}
\put(0,10){\line(1,0){10}}
\end{picture}\,\Lambda^{0,1}$}}
\put(300,5){\vector(1,0){20}}
\put(300,34){\vector(1,0){20}}
\put(300,30){\vector(1,-1){20}}
\put(290,60){\vector(4,-3){30}}
\put(365,35){\makebox(0,0){$\begin{picture}(10,10) 
\put(0,0){\line(1,0){10}}
\put(0,5){\line(1,0){10}} 
\put(0,0){\line(0,1){10}} 
\put(5,0){\line(0,1){10}}
\put(10,0){\line(0,1){10}}
\put(0,10){\line(1,0){10}}
\end{picture}\,\Lambda^{0,1}\otimes_\perp\Lambda^{1,0}$}}
\put(365,20){\makebox(0,0){$\oplus$}}
\put(365,5){\makebox(0,0){$\begin{picture}(10,10) 
\put(0,5){\line(1,0){10}}
\put(0,10){\line(1,0){10}} 
\put(0,0){\line(0,1){10}} 
\put(5,0){\line(0,1){10}}
\put(10,5){\line(0,1){5}}
\put(0,0){\line(1,0){5}}
\end{picture}\,\Lambda^{0,1}\!\otimes_\perp\!\begin{picture}(10,5)
\put(0,0){\line(1,0){10}} 
\put(0,5){\line(1,0){10}} 
\put(0,0){\line(0,1){5}}
\put(5,0){\line(0,1){5}} 
\put(10,0){\line(0,1){5}}
\end{picture}\,\Lambda^{1,0}$}}
\put(410,5){\vector(1,0){10}}
\put(405,25){\vector(1,-1){15}}
\put(427,5){\makebox(0,0){$0$.}}
\end{picture}}$$
Though the details are not yet worked out, it appears that this complex may
also be obtained from the Penrose transform of a suitable homogeneous bundle
$V$ on~${\mathbb{F}}$. This bundle $V$ arises as extension
$$0\to\begin{array}c\xxo{2}{-1}{0}_{\mathbb{F}}\\[-5pt] \oplus\\[-1pt] 
\xxo{-1}{1}{1}_{\mathbb{F}}
\end{array}\to V\to
\begin{array}c\xxo{-2}{3}{0}_{\mathbb{F}}\\[-5pt] \oplus\\[-1pt] 
\xxo{1}{0}{1}_{\mathbb{F}}
\end{array}\to 0.$$
If true, this would gives rise to some $1^{\mathrm{st}}$ cohomology too since
$$H^1({\mathbb{F}},V)=H^1({\mathbb{F}},\xxo{-2}{3}{0})=\ooo{0}{2}{0}.$$
For the moment, this remains a conjecture. The geometric significance of the
vector bundle~$V$, if any, also remains unclear. Presumably, this more 
satisfactory complex on ${\mathbb{CP}}_3$ is elliptic.

We conclude with some final geometric observations, which also provided the
main motivation for this study. The complex (\ref{asd_Hermitian}) has a simple
real form (\ref{asd_deformation}). Whilst this is not true for the complex just
suggested on~${\mathbb{CP}}_3$ as a satisfactory analogue, it is still true
that the first operator is simply (\ref{confKilling}) but acting on
complex-valued $1$-forms. This is the conformal Killing operator. The Killing
operator is similar but does not remove the trace:--
$$\Lambda^1\to
\begin{picture}(10,5)
\put(0,0){\line(1,0){10}}
\put(0,5){\line(1,0){10}}
\put(0,0){\line(0,1){5}}
\put(5,0){\line(0,1){5}}
\put(10,0){\line(0,1){5}}
\end{picture}\,\Lambda^1\quad\mbox{by}\quad
\phi_a\longmapsto\nabla_a\phi_b+\nabla_b\phi_a.$$
In~\cite{kokyuroku} these observations were used establish results concerning
real integral geometry on~${\mathbb{CP}}_2$. Unfortunately, as we have
seen, the Penrose transform for ${\mathbb{CP}}_n$ when $n\geq 3$
yields much more complicated results. Fortunately, for the purposes of real 
integral geometry on~${\mathbb{CP}}_n$, other methods found in 
joint work with Hubert Goldschmidt \cite{eg} have circumvented this approach.

\end{document}